\newcommand{\A}{\boldsymbol{A}}
\newcommand{\E}{\boldsymbol{E}}
\newcommand{\J}{\boldsymbol{J}}
\newcommand{\I}{\boldsymbol{I}}
\newtheorem{theorem}{Theorem}[section]
\newtheorem{lemma}[theorem]{Lemma}
\newtheorem{corollary}[theorem]{Corollary}
\theoremstyle{definition}
\title{Largest regular multigraphs with three distinct eigenvalues} 
\author{
Hiroshi Nozaki
}
\begin{document}
\maketitle

\renewcommand{\thefootnote}{\fnsymbol{footnote}}
\footnote[0]{2010 Mathematics Subject Classification: 
05C50
(05D05)
\\ \noindent
{\it Hiroshi Nozaki}: 
	Department of Mathematics Education, 
	Aichi University of Education, 
	1 Hirosawa, Igaya-cho, 
	Kariya, Aichi 448-8542, 
	Japan.
	hnozaki@auecc.aichi-edu.ac.jp.
}

\begin{abstract}
 We deal with connected $k$-regular multigraphs of order $n$ that  has only three distinct eigenvalues. 
In this paper, we study the largest possible number of vertices of such a graph for given $k$. 
For $k=2,3,7$, the Moore graphs are largest.  
For $k\ne 2,3,7,57$, we show an upper bound $n\leq k^2-k+1$, 
with equality if and only if there exists a finite projective plane of order $k-1$ that admits a polarity.  

\end{abstract}
\textbf{Key words}: 
Graph spectrum, Moore bound,  linear programming bound,
projective plane, 

\section{Introduction}
Let $G$ be a connected $k$-regular multigraph $(V,E)$, which may have a loop. 
For $u,v\in V$, let $m(u,v)$ be the number of edges between $u$ and $v$ if $u \ne v$, and the number of loops on $u$ if $u=v$. 
The {\it adjacency matrix} $\A$ of $G$ is defined to be the square matrix
indexed by $V$ whose $(u,v)$ entry is $ 
m(u,v)$ if $\{u,v\} \in E$ and 
$0$ otherwise.  
The eigenvalues of $\A$ are called the eigenvalues of $G$. 
In this paper, we deal with a $k$-regular multigraph $G$ with only 3 distinct eigenvalues. 
Since the degree of the minimal polynomial of $\A$ is $3$, the diameter of $G$ is at most $2$. This implies that the Moore bound $|V| \leq k^2+1$ holds for $k$-regular multigraphs with only 3 distinct eigenvalues. If $G$ attains this bound, $G$ is called a {\it Moore graph}, which is simple. A Moore graph does not exist except  for $(d,k)=(2,2),(2,3),(2,7),(2,57)$ \cite{BI73,D73}. The following Moore graphs uniquely exist: the 5-cycle for $k=2$, the Petersen graph for $k=3$, and the Hoffman--Singleton graph for $k=7$ \cite{HS60}. For $k=57$, the existence of the Moore graph is still open. 
The main problem of this paper is to improve the Moore bound, and to determine the largest $k$-regular multigraph with only 3 distinct eigenvalues for given $k\geq 3$.  
 
A $k$-regular simple graph of order $n$ is called a {\it strongly regular graph} with parameters $(n,k,\lambda, \mu)$ if 
there exist integers $\lambda$ and $\mu$ such that 
  any two adjacent vertices have $\lambda$ common neighbours, and 
 any two non-adjacent vertices have $\mu$ common neighbours. 
If a connected regular simple graph has only 3 distinct eigenvalues, then it is strongly regular. 
If a connected $k$-regular simple graph satisfies that any two adjacent vertices have at least $\lambda$ common neighbours, and 
 any two non-adjacent vertices have at least $\mu$ common neighbours, 
then the order $n$ has the bound  $n\leq k+1+k(k-1-\lambda)/ \mu $ (see \cite{BCNb}). 
Strongly regular graphs are characterized as the graphs that attain this bound. 

The point-line geometry $(\mathcal{P},
\mathcal{L})$ is called a {\it finite projective plane} of order $q$
if $|\mathcal{P}|=|\mathcal{L}|=q^2+q+1$, there exist 
$q+1$ points in each line, and there exist $q+1$ lines 
through each point.
The {\it incidence matrix} of $(\mathcal{P},
\mathcal{L})$ is the matrix indexed by $\mathcal{P}$ and $\mathcal{L}$ whose $(p,l)$ entry is 1 if $p \in l$, and 0 otherwise. 
An isomorphism $\varphi$ from  $(\mathcal{P},\mathcal{L})$ to the dual plane $(\mathcal{L},\mathcal{P})$ is a {\it polarity} if $\varphi$ is an involution. 
We say $(\mathcal{P},\mathcal{L})$ admits polarity if there exists a polarity from  $(\mathcal{P},\mathcal{L})$ to $(\mathcal{L},\mathcal{P})$. The classical finite projective planes admit a polarity. 
A finite projective plane $(\mathcal{P},
\mathcal{L})$ admits a polarity if and only if the incidence matrix of $(\mathcal{P},
\mathcal{L})$ can be symmetric. 
The symmetric incidence matrix of  $(\mathcal{P},\mathcal{L})$ is the adjacency matrix of a $(q-1)$-regular multigraph with only 3 distinct eigenvalues which has loops.  
For $k\ne 2,3,7,57$, we show an upper bound $n \leq k^2-k+1$ for 
$k$-regular  multigraphs of order $n$ with only 3 distinct eigenvalues. The equality holds if and only if 
the adjacency matrix of the graph is the symmetric incidence matrix of a finite projective plane of order $k-1$ that admits a polarity.

The paper is organized as follows. 
In Section~\ref{sec:2}, 
the linear programming bound \cite{N15} is generalized for 
 connected regular multigraphs. We also give a certain improvement of the Moore bound with prescribed distinct eigenvalues. 
In Section~\ref{sec:3}, we prove the upper bound $n \leq k^2-k+1$ for $k\ne 2,3,7,57$. 
In Section~\ref{sec:4}, we show that the existence of a connected $k$-regular multigraph $G$ of order $k^2-k+1$ with only 3 distinct eigenvalues is equivalent to the existence of a finite projective plane $PG(2,k-1)$ that admits a polarity. 

\section{Bounds for regular multigraphs} \label{sec:2}
Let $G$ be a multigraph $(V,E)$.
For $v_j \in V$ and  $e_{j} \in E$, 
a sequence $w_p=(v_0,e_{1},v_1, e_{2}, v_2, \ldots, v_{p-1},e_{p},v_p)$ is a {\it walk} if 
$e_{j}=\{v_{j-1},v_{j}\}$ for each $j \in \{1,\ldots,p\}$. We shortly write a walk 
$w_p=(e_{1},\ldots, e_{p})$.   
 The number $p$ is called the 
{\it length} of a walk.    
A walk $w_p$ is {\it non-backtracking} if 
there does not exist $j \in \{1,\ldots, p-1\}$ such that $e_{j}=e_{j+1}$, or $p=1$.   
A non-backtracking walk $w_p$ is a 
${\it cycle}$ if $v_{0}=v_{p}$ and $v_0,\ldots, v_{p-1}$ are distinct.   
The minimum length of 
cycles in $G$ is called the {\it girth} of $G$. If $G$ has a loop, then the girth of $G$ is 1. 
It is well known that the $(u,v)$-entry of 
$\boldsymbol{A}^i$ is the number of walks of length $i$ from $u$ to $v$. 
A multigraph $G$ is {\it $k$-regular} if 
$\sum_{v \in V} m(u,v)$ is $k$ for each $u \in V$. 

Let $F_i^{(k)}$ denote a polynomial of degree $i$ defined by
\begin{equation*}
F_0^{(k)}(x)=1, \qquad F_1^{(k)}(x)=x, \qquad  F_2^{(k)}(x)=x^2 - k, 
\end{equation*}
and
\begin{equation*}
F_i^{(k)}(x)=x F_{i-1}^{(k)}(x)- (k-1) F_{i-2}^{(k)}(x)
\end{equation*}
for $i\geq 3$. Note that $F_i^{(k)}(k)=k(k-1)^{i-1}$ for $i\geq 1$.

Singleton \cite{S66} proved the following theorem only for $k$-regular simple graphs.
\begin{theorem} \label{thm:red_path}
Let $G$ be a connected $k$-regular multigraph with adjacency matrix $\boldsymbol{A}$.
Then 
the $(u,v)$-entry of $F_i^{(k)}(\boldsymbol{A})$ is the number of non-backtracking walks 
of length $i$ from $u$ to $v$. 
\end{theorem}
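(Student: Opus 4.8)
The plan is to prove the statement by induction on $i$, showing that the recurrence \eqref{eq:F_i} for the polynomials $F_i^{(k)}$ exactly mirrors a recurrence counting non-backtracking walks. Let me write $\boldsymbol{B}_i$ for the matrix whose $(u,v)$-entry is the number of non-backtracking walks of length $i$ from $u$ to $v$. The base cases $i=0,1,2$ must be checked directly: $\boldsymbol{B}_0=\boldsymbol{I}=F_0^{(k)}(\boldsymbol{A})$ and $\boldsymbol{B}_1=\boldsymbol{A}=F_1^{(k)}(\boldsymbol{A})$ are immediate since every walk of length $0$ or $1$ is vacuously non-backtracking. For $i=2$, a walk $(e_{i_1},e_{i_2})$ of length $2$ from $u$ to $v$ fails to be non-backtracking precisely when $e_{i_1}=e_{i_2}$, which forces $u=v$; the number of such backtracking walks from $u$ to $u$ is exactly $\sum_{w}\boldsymbol{A}_{uw}=k$ by $k$-regularity (each choice of the repeated edge $e_{i_1}$ gives one). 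Here one must be a little careful about loops and multiedges: if $e$ is a loop at $u$ counted with multiplicity, the bookkeeping via the multiset $\mathcal{E}$ still gives the count $k$. Hence $\boldsymbol{B}_2=\boldsymbol{A}^2-k\boldsymbol{I}=F_2^{(k)}(\boldsymbol{A})$.

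For the inductive step with $i\geq 3$, the key identity is $\boldsymbol{B}_i=\boldsymbol{A}\boldsymbol{B}_{i-1}-(k-1)\boldsymbol{B}_{i-2}$. To see this, take a non-backtracking walk $w_{i-1}=(e_{i_1},\dots,e_{i_{i-1}})$ from $u$ to some vertex $w$, and append an edge $e$ incident to $w$ and $v$; the matrix $\boldsymbol{A}\boldsymbol{B}_{i-1}$ counts all such extensions, i.e.\ all walks of length $i$ from $u$ to $v$ whose first $i-1$ steps are non-backtracking. Such an extension fails to be non-backtracking exactly when the appended edge $e$ equals the last edge $e_{i_{i-1}}$ of $w_{i-1}$. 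I must count these bad extensions: given a non-backtracking walk of length $i-2$ from $u$ to some vertex $w'$ (with last edge $e'$, or any edge if $i-2\le 1$), we traverse one more edge $e'$ to reach $w$ and then immediately traverse $e'$ back — but this requires $v=w'$, and the new last edge of the length-$(i-1)$ walk is $e'$ itself, so the forced backtrack re-traverses $e'$ a third time. The count of bad extensions ending at $v$ turns out to be $(k-1)(\boldsymbol{B}_{i-2})_{uv}$: from each non-backtracking walk of length $i-2$ from $u$ to $v=w'$ with last edge $e'$ (when $i-2\ge 2$; the cases $i-2=1$ need the regularity count again), we may extend by any of the $k-1$ edges at $v$ other than $e'$, then forcibly come back along that edge. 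Subtracting, $\boldsymbol{B}_i=\boldsymbol{A}\boldsymbol{B}_{i-1}-(k-1)\boldsymbol{B}_{i-2}$, and by the inductive hypothesis this equals $\boldsymbol{A}F_{i-1}^{(k)}(\boldsymbol{A})-(k-1)F_{i-2}^{(k)}(\boldsymbol{A})=F_i^{(k)}(\boldsymbol{A})$.

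I expect the main obstacle to be the careful combinatorial justification of the ``$(k-1)$'' coefficient in the bad-extension count, and making it robust in the presence of loops and multiple edges. The subtlety is that ``backtracking'' is defined in terms of the labels $i_j$ in the multiset $\mathcal{E}$, not merely in terms of the vertex sequence, so one has to track walks as sequences of edge-tokens; a multiedge of multiplicity $m$ contributes $m$ distinct tokens, and a loop at $v$ contributes tokens that count toward the degree $k$ at $v$ in the required way. Once the bookkeeping is set up so that ``number of edges at $v$ other than the one just used'' is always exactly $k-1$ (this is where $k$-regularity enters, together with the convention for loops), the recurrence falls out cleanly. A cleaner alternative I would consider is to first establish the identity at the level of counts by a direct bijective argument between (non-backtracking length-$i$ walks) $\sqcup$ ($(k-1)$ copies of non-backtracking length-$(i-2)$ walks) and (length-$i$ walks with non-backtracking length-$(i-1)$ prefix), then translate to matrices; but the inductive matrix identity above is likely the most economical write-up.
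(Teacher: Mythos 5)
Your proposal is correct and follows essentially the same route as the paper: induction on $i$ with base cases $i=1,2$ (where $\boldsymbol{B}_2=\boldsymbol{A}^2-k\boldsymbol{I}$ comes from the $k$ backtracking walks $(e,e)$ at each vertex) and the recurrence $\boldsymbol{B}_i=\boldsymbol{A}\boldsymbol{B}_{i-1}-(k-1)\boldsymbol{B}_{i-2}$, with the $(k-1)$ counting the bad extensions that repeat an edge. The only cosmetic difference is that you append the extra edge at the end of the walk while the paper prepends it at the beginning.
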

\begin{proof}
We use induction on $i$.  Let $b_{uv}^{(i)}$ be the number of non-backtracking walks 
of length $i$ from $u$ to $v$. 
Let $f_{uv}^{(i)}$ be the $(u,v)$-entry of $F_i^{(k)}(\boldsymbol{A})$.  
For $i=1$, the assertion is trivial. 
For $i=2$, the $(u,v)$-entry $a^{(2)}_{uv}$ of $\boldsymbol{A}^2$ is the number of walks of length $2$ from $u$ to $v$. 
A walk that has backtracking must form $(e_i,e_i)$. The assertion follows from $b_{uv}^{(2)}=a^{(2)}_{uv}-k\delta_{uv}$, 
where $\delta$ is the Kronecker delta.   

Suppose $f_{uv}^{(j)}=b_{uv}^{(j)}$ for each $j \in \{1,\ldots,i-1\}$.  
Since $F_i^{(k)}(\boldsymbol{A})=\boldsymbol{A}F_{i-1}^{(k)}(\boldsymbol{A})-(k-1)F_{i-2}^{(k)}(\boldsymbol{A})$,  we have
\begin{align*}
f_{uv}^{(i)}&=\sum_{s \in V} f_{us}^{(1)} f_{sv}^{(i-1)}-(k-1)f_{uv}^{(i-2)}\\ 
&= \sum_{s \in V} b_{us}^{(1)} b_{sv}^{(i-1)}-(k-1)b_{uv}^{(i-2)}.
\end{align*}
The value $\sum_{s \in V} b_{us}^{(1)} b_{sv}^{(i-1)}$ is the number of walks 
$(e_{1},\ldots,e_{p})$ such that $e_{1}=\{u,*\}$, $e_{p}=\{*,v\}$, and 
$(e_{2},\ldots,e_{p})$ is non-backtracking. 
We remove walks that have backtracking, namely the ones satisfying $e_{1}=e_{2}$.
For given non-backtracking walk $(e_{3},\ldots,e_{p})$, the number of 
choices of $e_{1}$ is equal to $k-1$ because $e_{1} \ne e_{3}$. 
Therefore $f_{uv}^{(i)}=b_{uv}^{(i)}$ follows. 
\end{proof}

Let $\I$ denote the identity matrix. 
Let $\J$ denote the matrix whose entries are all $1$. 
In \cite{N15} we proved the following theorem only for $k$-regular simple graphs. 
\begin{theorem} \label{thm:lp}
Let $G$ be a connected $k$-regular multigraph of order $n$ with adjacency matrix $\A$. 
Let $\tau_0,\ldots,\tau_d$ be
the distinct eigenvalues of $\A$, where 
$\tau_0=k$. Let $f(x)$ be the polynomial defined by $f(x)=\sum_{i=0}^s f_i F_i^{(k)}(x)$ with
a positive integer $s$ and real numbers 
$f_0,\ldots,f_s$ such that $f_0>0$, 
$f_i\geq 0$ for each $i \in \{1,\ldots,s\}$.   
If $f(k)>0$ and $f(\tau_j)\leq 0$ for each $j \in \{1,\ldots,d\}$, then
\[
n \leq \frac{f(k)}{f_0}. 
\] 
\end{theorem}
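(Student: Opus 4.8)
The plan is to estimate $\tr(f(\A))$ in two different ways and compare the results.

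First I would record the structural input. Since $G$ is connected, $\A$ is a nonnegative, symmetric, irreducible matrix all of whose row sums equal $k$; hence its spectral radius is exactly $k$, and by the Perron--Frobenius theorem the eigenvalue $\tau_0=k$ is simple. Write $m_1,\dots,m_d$ for the multiplicities of $\tau_1,\dots,\tau_d$. Because $f(\A)$ is a polynomial in $\A$, its eigenvalues are $f(\tau_0),f(\tau_1),\dots,f(\tau_d)$ with the same multiplicities, so
\[
\tr(f(\A))=f(k)+\sum_{j=1}^{d}m_j f(\tau_j)\le f(k),
\]
since $m_j>0$ and $f(\tau_j)\le 0$ for each $j\in\{1,\dots,d\}$ by hypothesis.

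Next I would exploit the way $f$ is presented. Expanding in the prescribed basis gives $f(\A)=\sum_{i=0}^{s}f_i F_i^{(k)}(\A)$, hence $\tr(f(\A))=\sum_{i=0}^{s}f_i\tr(F_i^{(k)}(\A))$. The term $i=0$ contributes $f_0 n$ because $F_0^{(k)}(\A)=\I$. For $i\ge 1$, Theorem~\ref{thm:red_path} identifies the $(v,v)$-entry of $F_i^{(k)}(\A)$ with the number of non-backtracking walks of length $i$ from $v$ to $v$, a nonnegative integer; summing over $v\in V$ gives $\tr(F_i^{(k)}(\A))\ge 0$. Together with $f_i\ge 0$ for $i\ge 1$ this yields
\[
\tr(f(\A))=f_0 n+\sum_{i=1}^{s}f_i\tr(F_i^{(k)}(\A))\ge f_0 n .
\]
Combining the two estimates gives $f_0 n\le f(k)$, and dividing by $f_0>0$ completes the proof.

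I do not expect a serious obstacle: this is the classical trace form of the linear programming bound, and the whole point of introducing the polynomials $F_i^{(k)}$ and proving Theorem~\ref{thm:red_path} is precisely to guarantee $\tr(F_i^{(k)}(\A))\ge 0$, which is what makes the combinatorial side of the comparison work. The only place demanding a little care is the simplicity of the eigenvalue $k$: without it the term $f(k)$ in the spectral computation would become $m_0 f(k)$ and the desired inequality would fail, so one genuinely needs connectedness (irreducibility of $\A$), not merely $k$-regularity. One should also keep in mind that the definition of non-backtracking walk used here allows loops, so that the length-$1$ and length-$2$ cases already settled inside the proof of Theorem~\ref{thm:red_path} feed correctly into the estimate above.
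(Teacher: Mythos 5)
Your proof is correct and takes essentially the same route as the paper: the paper also compares $\tr(f(\A))$ computed spectrally (via the decomposition $\A=\sum_j\tau_j\E_j$ with $\E_0=(1/n)\J$, which encodes the simplicity of the eigenvalue $k$ that you derive from Perron--Frobenius) against the expansion $\sum_i f_i\tr(F_i^{(k)}(\A))\ge nf_0$ justified by Theorem~\ref{thm:red_path}. The only cosmetic difference is that you phrase the spectral side with multiplicities $m_j$ where the paper uses $\tr(\E_j)\ge 0$ from positive semidefiniteness.
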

\begin{proof}
Since $\A$ is a real symmetric  matrix, 
we have the spectral decomposition 
$\A=\sum_{i=0}^d \tau_i\E_i$, where $\E_0=(1/n)\J$.     
It follows that 
\begin{equation} \label{eq:pr1}
\sum_{j=0}^d f(\tau_j) \E_j =f(\A)
=\sum_{i=0}^s f_i F_i^{(k)}(\A).
\end{equation} 
Taking the traces in \eqref{eq:pr1}, we have
\begin{multline*}
f(k)={\rm tr}(f(k) \E_0) \geq 
{\rm tr}\left(\sum_{j=0}^d f(\tau_j) \E_j \right)\\
=
{\rm tr}\left(\sum_{i=0}^s f_i F_i^{(k)}(\A) \right) \geq {\rm tr }(f_0 \I)=nf_0, 
\end{multline*}
because $\E_j$ is positive semidefinite, 
and each entry in $F_i^{(k)}(\A)$ is non-negative by Theorem~\ref{thm:red_path}. 
It therefore follows $n\leq f(k)/f_0$. 
\end{proof}

Let $k_i=k(k-1)^{i-1}$ and $k_0=1$.  


\begin{theorem}\label{thm:harm_bound}
Let $G$ be a connected $k$-regular multigraph of order $n$ with adjacency matrix $\A$. 
Let $F(x)$ be the polynomial defined by 
\begin{equation} \label{eq:thm1}
F(x)=\sum_{i=0}^s f_i F_i^{(k)}(x)
\end{equation} 
for 
  some real numbers $f_0,\ldots,f_s$. 
If the entries of $F(\A)$ are all positive, then
\begin{equation} \label{eq:harm_abso}
n \leq \sum_{i\in\{0,\ldots,d\}: f_i>0} k_i. 
\end{equation}
\end{theorem}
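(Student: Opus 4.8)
The two preliminary results above are exactly the ingredients the proof needs: by Theorem~\ref{thm:red_path} every matrix $F_i^{(k)}(\A)$ has nonnegative entries, and by Lemma~\ref{lem:ki} it has at most $nk_i$ nonzero entries. The plan is to exploit the hypothesis that $F(\A)$ is a \emph{positive} matrix, so that all $n^2$ of its entries are strictly positive, and then to observe that each of these $n^2$ positive positions must already appear as a nonzero position of some $F_i^{(k)}(\A)$ with $f_i>0$.

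First I would arrange that $s\le d$. Since $G$ has exactly $d+1$ distinct eigenvalues, the minimal polynomial of $\A$ has degree $d+1$, so the matrices $F_0^{(k)}(\A),\dots,F_d^{(k)}(\A)$ already span $\R[\A]$; replacing $F$ by its remainder upon division by this minimal polynomial leaves $F(\A)$ unchanged and lets us assume $\deg F\le d$, so that the sum in \eqref{eq:thm1} may be taken over $i\in\{0,\dots,d\}$.

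For the main step, fix a pair $(u,v)\in V\times V$. Expanding $\bigl(F(\A)\bigr)_{uv}=\sum_{i=0}^{s}f_i\bigl(F_i^{(k)}(\A)\bigr)_{uv}$ and using that each summand $\bigl(F_i^{(k)}(\A)\bigr)_{uv}$ is nonnegative, positivity of the left-hand side forces at least one index $i$ with $f_i>0$ and $\bigl(F_i^{(k)}(\A)\bigr)_{uv}\neq 0$. Hence
\[
\bigl\{(u,v):\bigl(F(\A)\bigr)_{uv}>0\bigr\}\ \subseteq\ \bigcup_{i\colon f_i>0}\bigl\{(u,v):\bigl(F_i^{(k)}(\A)\bigr)_{uv}\neq 0\bigr\}.
\]
The set on the left has $n^2$ elements because $F(\A)$ is positive, and for each $i$ the set on the right has at most $nk_i$ elements by Lemma~\ref{lem:ki}. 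Therefore $n^2\le\sum_{i\colon f_i>0}nk_i$, and dividing by $n$ yields the bound \eqref{eq:harm_abso}.

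Conceptually this is a short argument, and the counting inequality in the last paragraph is routine once it is set up. The point that deserves a little care is the reduction in the second paragraph: one must check that truncating $F$ to degree at most $d$ is legitimate and that the indices surviving in the conclusion are exactly $\{0,\dots,d\}$ — that is, the indices controlled by the bound $\le d$ on the diameter of $G$.
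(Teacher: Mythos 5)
Your proof is correct and follows essentially the same route as the paper: nonnegativity of the entries of $F_i^{(k)}(\A)$ (Theorem~\ref{thm:red_path}) plus the count of at most $nk_i$ nonzero entries (Lemma~\ref{lem:ki}) force $n^2\le\sum_{i:f_i>0}nk_i$ once $F(\A)$ is positive. Your preliminary truncation of $F$ modulo the minimal polynomial is an extra tidying step the paper omits (it simply indexes the sum by $s$ and $d$ somewhat interchangeably), but it does not change the argument.
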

\begin{proof}
Since each $(u,v)$-entry of $F(\A)$ is positive, there exists $i \in \{0,\ldots, d\}$ such that 
$f_i>0$ and the $(u,v)$-entry in $F_i^{(k)}(\A)$ is positive. 
For each $u \in V$, the number of non-backtracking walks of length $i$ from $u$ is equal to $k_i$. Thus the number of non-zero entries 
in $F_i^{(k)}(\A)$ is at most $nk_i$. 
Comparing 
the numbers of positive entries in the both sides in \eqref{eq:thm1}, it follows that 
\[
n^2 \leq \sum_{i\in\{0,\ldots,s\}: f_i>0} n k_i.
\]
 This implies the theorem. 
\end{proof}
Let $H_G(x)$ denote the Hoffman polynomial \cite{H63,HM65}
of a regular multigraph $G$, which is the polynomial of least degree satisfying  $H_G(\A)=\J$. 
If the distinct  eigenvalues of $G$ are $\tau_0=k,\tau_1, \ldots,\tau_d$ and the order of $G$ is $n$, then $H_G$ can be expressed by 
\[
H_G(x)=n\prod_{i=1}^d \frac{x-\tau_i}{k-\tau_i}. 
\]
\begin{corollary} \label{coro:2ch}
Let $G$ be a $k$-regular multigraph of order $n$, with only $d+1$ distinct eigenvalues $\tau_0=k,\tau_1,\ldots,\tau_d$.  
Let $F_G(x)$ be the polynomial defined by  $F_G(x)=\prod_{i=1}^d (x-\tau_i)$. Then, 
from the expression $F_G(x)=\sum_{i=0}^d f_i F_i^{(k)}(x)$, it follows that 
$n \leq  \sum_{i\in\{0,\ldots,d\}: f_i>0} k_i$. 

\end{corollary}
\begin{proof}
The polynomial $F_G(x)$ can be expressed by $F_G(x)=(\prod_{i=1}^d (k-\tau_i)/n)H_G(x)$. Therefore, 
 each entry of $F_G(\A)=(\prod_{i=1}^d (k-\tau_i)/n)\J$ is positive. 
Applying Theorem~\ref{thm:harm_bound} to $F_G(x)$, we obtain the bound  $n \leq  \sum_{i\in\{0,\ldots,d\}: f_i>0} k_i$.
\end{proof}
If each $f_i$ is positive in Corollary~\ref{coro:2ch}, then the bound \eqref{eq:harm_abso} coincides with the Moore bound. 

\section{Upper bound for regular multigraphs with three eigenvalues} \label{sec:3}
In this section, we prove an upper bound for $k$-regular multigraphs with only $3$ distinct eigenvalues, which means  
Theorem~\ref{thm:bound}. First we prove several lemmas to prove Theorem~\ref{thm:bound}. 

\begin{lemma} \label{lem:harm}
Let $G$ be a connected $k$-regular multigraph of order $n$ with only 3 distinct eigenvalues $k$, $\tau_1$, $\tau_2$. If $\tau_1+\tau_2 \geq 0$, then 
$n \leq k^2-k+1$. 
\end{lemma}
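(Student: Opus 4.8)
The plan is to apply Theorem~\ref{thm:harm_bound} with the polynomial $F(x)=F_G(x)=(x-\tau_1)(x-\tau_2)$, which is a quadratic, so we only need its expansion in the basis $\{F_0^{(k)},F_1^{(k)},F_2^{(k)}\}=\{1,\,x,\,x^2-k\}$. First I would write $F(x)=x^2-(\tau_1+\tau_2)x+\tau_1\tau_2=F_2^{(k)}(x)-(\tau_1+\tau_2)F_1^{(k)}(x)+(\tau_1\tau_2+k)F_0^{(k)}(x)$, so that $f_2=1$, $f_1=-(\tau_1+\tau_2)$, and $f_0=\tau_1\tau_2+k$. Since $\tau_1,\tau_2$ are the non-principal eigenvalues of a connected graph of diameter exactly $2$ (unless $G$ is complete, which I would treat as a trivial boundary case), the Hoffman polynomial identity $H_G(\A)=\frac{n}{(k-\tau_1)(k-\tau_2)}F(\A)=\J$ shows $F(\A)$ has all positive entries, so Theorem~\ref{thm:harm_bound} applies. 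Its conclusion is $n\leq\sum_{i:f_i>0}k_i$, where $k_0=1$, $k_1=k$, $k_2=k(k-1)=k q$.

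The arithmetic core is then to check which of $f_0,f_1,f_2$ are positive under the hypothesis $\tau_1+\tau_2\geq 0$. We always have $f_2=1>0$, contributing $k_2=kq=q^2+q$. The hypothesis $\tau_1+\tau_2\geq 0$ gives $f_1=-(\tau_1+\tau_2)\leq 0$, so $f_1$ does \emph{not} contribute. For $f_0=\tau_1\tau_2+k$: the sum of \emph{all} eigenvalues (with multiplicity) is $\tr(\A)\geq 0$ — actually $\A$ may have a nonnegative trace coming from loops, but more usefully, $\tr(\A^2)=\sum_v\sum_w\A_{vw}^2\geq nk$ since each row of $\A$ has entries summing to $k$ with the sum of squares at least $k$ (equality iff the graph is simple and loopless); this forces $k^2+m_1\tau_1^2+m_2\tau_2^2\geq nk$ where $m_1,m_2$ are the multiplicities. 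I expect the cleaner route is: diameter $2$ forces $\A^2$ to have a positive entry off the diagonal wherever $\A$ has a zero, which combined with the three-eigenvalue structure is exactly the condition making $\tau_1\tau_2<0$ impossible to push $f_0\leq 0$ — more concretely, $F(0)=\tau_1\tau_2$ is the $(v,v)$ contribution issue, and $f_0=F(0)+k=\tau_1\tau_2+k$; one shows $\tau_1\tau_2+k>0$, so $f_0$ contributes $k_0=1$. Granting this, the bound reads $n\leq k_0+k_2=1+q^2+q$, which is the claim.

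The step I expect to be the main obstacle is establishing $f_0=\tau_1\tau_2+k>0$ (equivalently, that $F_G$ has positive constant term in the $F_i^{(k)}$-basis) \emph{without} already assuming the Moore-type conclusion. The danger is a graph where $\tau_1\tau_2$ is very negative — but $\tau_1\tau_2\geq -k$ should follow because $F(\A)=\sum f_iF_i^{(k)}(\A)$ has nonnegative off-diagonal part automatically (each $F_i^{(k)}(\A)$ counts non-backtracking walks, hence is entrywise $\geq 0$) while $F(\A)=\frac{(k-\tau_1)(k-\tau_2)}{n}\J$, and evaluating a diagonal entry gives $f_0\cdot 1 + f_2\cdot F_2^{(k)}(\A)_{vv} = \frac{(k-\tau_1)(k-\tau_2)}{n}$ with $F_2^{(k)}(\A)_{vv}=(\A^2)_{vv}-k=\bigl(\sum_w\A_{vw}^2\bigr)-k\geq 0$; since the left side must be positive, and $f_2F_2^{(k)}(\A)_{vv}\geq 0$, this does \emph{not} immediately bound $f_0$ below. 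Instead I would argue on the diagonal entry equation together with the fact that for at least one vertex $v$ (a vertex of a simple edge, which exists unless $G$ is a disjoint-loop-free triviality) $F_2^{(k)}(\A)_{vv}$ is controlled, pinning $f_0>0$; alternatively, one notes $\sum_i m_i\tau_i = \tr\A\geq 0$ and $\sum_i m_i = n$ with $m_0=1$, and combines with $f_0=\tau_1\tau_2+k$ via the constraint that the number of positive $f_i$'s must be consistent with $n\geq k+1$. I would present whichever of these is shortest; the rest of the proof is the one-line basis expansion and a direct appeal to Theorem~\ref{thm:harm_bound}.
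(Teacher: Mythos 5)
Your approach is exactly the paper's: expand $F_G(x)=(x-\tau_1)(x-\tau_2)$ in the basis $\{F_i^{(k)}\}$ to get $f_2=1$, $f_1=-(\tau_1+\tau_2)\leq 0$, $f_0=k+\tau_1\tau_2$, note that $F_G(\A)=\frac{(k-\tau_1)(k-\tau_2)}{n}\J$ is entrywise positive, and invoke Theorem~\ref{thm:harm_bound}. The one thing to correct is the ``main obstacle'' you identify: you do \emph{not} need to prove $f_0=k+\tau_1\tau_2>0$. The conclusion of Theorem~\ref{thm:harm_bound} is $n\leq\sum_{i:\,f_i>0}k_i$, and since $f_1\leq 0$ the index set $\{i: f_i>0\}$ is contained in $\{0,2\}$ no matter what sign $f_0$ has; hence $\sum_{i:\,f_i>0}k_i\leq k_0+k_2=1+k(k-1)=q^2+q+1$ in every case. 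If $f_0\leq 0$ the theorem would give the even stronger bound $n\leq k_2=q^2+q$, so the sign of $f_0$ can only help you. The entire final paragraph of your proposal (the attempts to pin down $\tau_1\tau_2\geq -k$ via traces, diagonal entries, etc.) can therefore be deleted, and the proof is the two-line argument you give in your first paragraph together with the observation $\{i:f_i>0\}\subseteq\{0,2\}$. (Your aside about the complete graph is also unnecessary: a complete simple graph has only two distinct eigenvalues, so it is excluded by hypothesis, and the positivity of $F_G(\A)$ needs only the Hoffman identity, not that the diameter equals exactly $2$.)
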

\begin{proof}
The polynomial $F_G(x)=(x-\tau_1)(x-\tau_2)$ can be expressed by 
\[
F_G(x)=F_2^{(k)}(x)-(\tau_1+\tau_2) F_1^{(k)}(x) + (k+\tau_1\tau_2)F_0^{(k)}(x). 
\]
By $\tau_1+\tau_2\geq 0$ and Corollary~\ref{coro:2ch},  we have 
$
n \leq k_0+k_2=k^2-k+1  
$. 
\end{proof}

\begin{lemma} \label{lem:no_multi}
In a multigraph of maximum degree at most $k$, if a vertex $u$ is incident
with a multiedge then there are at most $k^2-k$ vertices within distance two of $u$.  
\end{lemma}
\begin{proof} 
Let $v$ be a vertex adjacent to $u$ with a multiedge. 
Then, it follows that 
\begin{align*}
|\{w \in V \colon\,  \partial(u,w) \leq 2\}|&=1+|\{w \in V \colon\, \partial(u,w)=1 \}|+|\{w \in V \colon\, \partial(u,w)=2 \}|\\
&\leq 1+(k-1)+|\{w \in V \colon\, \partial(u,w)=2, (v,w)\in E \}|\\
&\qquad \qquad +|\{w \in V \colon\, \partial(u,w)=2, (v,w)\notin E \}|\\
&\leq 1+(k-1)+(k-2)+(k-1)(k-2)=
k^2-k,
\end{align*}
where  $\partial(u,w)$ is the distance between $u$ and $w$. 
\end{proof}
Let $l_v$ denote the number of loops of $v \in V$. 
\begin{lemma}\label{lem:larger}
Let $G$ be a connected $k$-regular multigraph of order $n$ with only 3 distinct eigenvalues. 
If $n>k^2-k+1$, then $G$ is simple and strongly regular. 
\end{lemma}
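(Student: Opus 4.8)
The plan is to establish the two halves of the conclusion separately: first that $G$ is simple (no multiple edges and no loops), and then that a connected simple $k$-regular graph with only three distinct eigenvalues is automatically strongly regular.

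\textbf{No multiple edges.} Since $n>q^2+q+1$ in particular $n\ge q^2+q+1$, so Lemma~\ref{lem:no_multi} already tells us that no two distinct vertices of $G$ are joined by more than one edge. It remains only to rule out loops.

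\textbf{No loops.} I would argue by contradiction: suppose some vertex $v$ carries $l_v\ge 1$ loops. Because $\A$ has three distinct eigenvalues the diameter of $G$ is at most $2$, so every vertex lies at distance $0$, $1$, or $2$ from $v$ and hence
\[
n=1+|\{w:\partial(v,w)=1\}|+|\{w:\partial(v,w)=2\}|.
\]
By the previous step each neighbour of $v$ is joined to $v$ by a single edge, so $v$ has exactly $k-l_v\le q$ neighbours. Each such neighbour $w$ has degree $k$, at least one unit of which is used by the edge $\{v,w\}$, so $w$ is adjacent to at most $k-1=q$ vertices other than $v$; these account for all distance-$2$ vertices reached through $w$. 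Summing over the neighbours of $v$ yields
\[
n\le 1+(k-l_v)+(k-l_v)(k-1)\le 1+q+q^2=q^2+q+1,
\]
contradicting $n>q^2+q+1$. Hence $G$ has no loop, and combined with the previous step $G$ is simple. (This mirrors the distance-counting in the proof of Lemma~\ref{lem:no_multi}, with the loop multiplicity $l_v$ playing the role of the extra unit of degree consumed at $v$.)

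\textbf{Strong regularity.} With $G$ simple, I would invoke the Hoffman polynomial from Section~\ref{sec:2}. Writing $\tau_0=k,\tau_1,\tau_2$ for the distinct eigenvalues, $H_G(\A)=\J$ gives $(\A-\tau_1\I)(\A-\tau_2\I)=\frac{(k-\tau_1)(k-\tau_2)}{n}\J$, that is,
\[
\A^2=(\tau_1+\tau_2)\A-\tau_1\tau_2\I+\frac{(k-\tau_1)(k-\tau_2)}{n}\J .
\]
Since $G$ is simple, for $u\neq v$ the $(u,v)$-entry of $\A^2$ is exactly the number of common neighbours of $u$ and $v$; reading it off the identity above, this number equals $\tau_1+\tau_2+\frac{(k-\tau_1)(k-\tau_2)}{n}$ whenever $u\sim v$ and $\frac{(k-\tau_1)(k-\tau_2)}{n}$ whenever $u\not\sim v$. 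Both are constants independent of the chosen pair, and a graph with three distinct eigenvalues is neither complete nor edgeless, so $G$ is strongly regular.

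I expect the only genuinely delicate step to be the loop elimination: one must be careful about the degree bookkeeping at $v$ and at its neighbours, and in particular remember that Lemma~\ref{lem:no_multi} excludes only multiple edges between distinct vertices, not multiple loops, so the quantity $l_v$ has to be bounded and carried through the estimate directly rather than assumed to be $0$ or $1$. Once $G$ is known to be simple, strong regularity is an immediate consequence of the Hoffman-polynomial identity already developed in Section~\ref{sec:2}.
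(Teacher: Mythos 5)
Your proof is correct, and the key step --- ruling out loops --- is done by a genuinely different and more elementary argument than the paper's. The paper first invokes Lemma~\ref{lem:harm} to get $\tau_1+\tau_2<0$, then compares diagonal entries in the Hoffman identity \eqref{eq:hoff} to show that $l_v^2-(\tau_1+\tau_2+1)l_v$ is constant, deduces (after excluding the case where every vertex has a loop) that $l_u\in\{0,\tau_1+\tau_2+1\}$, and finally kills the nonzero option by an algebraic-integer case analysis (rational integers versus a conjugate pair $a\pm\sqrt b$). You instead run a direct Moore-type distance count from a vertex $v$ carrying a loop: with no multiple edges, $v$ has exactly $k-l_v\le q$ neighbours, each contributing at most $k-1$ further vertices, so $n\le 1+(k-l_v)+(k-l_v)(k-1)\le 1+q+q^2$, contradicting $n>q^2+q+1$. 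The bookkeeping is right under the paper's degree convention (a loop contributes one unit to the row sum of $\A$), and the argument needs neither Lemma~\ref{lem:harm} nor any spectral information about $\tau_1,\tau_2$. What the paper's longer route buys is the identity $l_u\in\{0,\tau_1+\tau_2+1\}$, which is explicitly reused in the proof of Lemma~\ref{prop:1} for the boundary case $n=q^2+q+1$ (where loops genuinely do occur and your counting bound is tight rather than contradictory); your argument, by contrast, is tailored to the strict inequality $n>q^2+q+1$ and settles this lemma more quickly. Your final step --- reading off constant $\lambda$ and $\mu$ from the Hoffman identity once $G$ is simple --- is the standard fact the paper leaves implicit in the sentence ``It suffices to show that $G$ is simple,'' and is correctly executed.
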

\begin{proof}
It suffices to show that $G$ is simple. 
Let $\tau_1$, $\tau_2$ be the distinct eigenvalues of $G$ with $\tau_1,\tau_2 \ne k$.  
By Lemma~\ref{lem:harm}, 
we have $\tau_1+\tau_2<0$. 
By Lemma~\ref{lem:no_multi}, $G$ has no multiedge.  
The Hoffman polynomial of $G$ can 
be expressed by 
\[
H_G(x)=n \frac{(x-\tau_1)(x-\tau_2)}{(k-\tau_1)(k-\tau_2)}. 
\]
It therefore follows that 
\begin{equation}\label{eq:hoff}
n(\A^2-(\tau_1+\tau_2)\A+\tau_1\tau_2 \I)=(k-\tau_1)(k-\tau_2) \J, 
\end{equation}
where $\A$ is the adjacency matrix of $G$. 
Comparing the $(v,v)$-entry of the both sides in \eqref{eq:hoff}, 
we obtain 
\begin{equation*} 
l_v^2-(\tau_1+\tau_2+1)l_v=\frac{1}{n}(k-\tau_1)(k-\tau_2)-k-\tau_1\tau_2.
\end{equation*}
The value $l_v^2-(\tau_1+\tau_2+1)l_v$ is constant for each $v \in V$. 
If $l_v>0$ for each $v \in V$, then 
\[
n \leq 1+(k-2)+(k-2)(k-2)=k^2-3k+3<k^2-k+1, 
\]
which contradicts our assumption. 
We may suppose some $v\in V$ satisfies $l_v=0$. This implies that $l_u^2-(\tau_1+\tau_2+1)l_u=0$, namely $l_u=0$ or $l_u=\tau_1+\tau_2+1$ for each $u \in V$.
Since $\tau_1+\tau_2<0$ holds, it follows that $l_u=\tau_1+\tau_2+1<1$ and $l_u=0$ for each $u \in V$. 
\end{proof}

\begin{lemma}\label{lem:larger_srg}
Let $G$ be a connected $k$-regular multigraph of order $n$ with only 3 distinct eigenvalues. 
If $n> k^2-k+1$ and $k\geq 3$, then there does not exist $G$ except for Moore graphs.  If $n>k^2-k+1$ and $k=2$, then $G$ is the cycle graph of order 4 or 5.  
\end{lemma}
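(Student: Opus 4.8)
The plan is to reduce, via Lemma~\ref{lem:larger}, to a strongly regular graph and then force its parameters to be those of a Moore graph of diameter $2$. First I would set up the strongly regular structure. By Lemma~\ref{lem:larger} the graph $G$ is simple, and since $n > q^2+q+1 \geq k+1$ it is connected, $k$-regular and not complete; in particular equation~\eqref{eq:hoff} from the proof of Lemma~\ref{lem:larger} is available. Because $G$ is simple, each diagonal entry of $\A^2$ equals $k$ and each off-diagonal $(u,v)$-entry is the number of common neighbours of $u$ and $v$. Comparing the off-diagonal entries of~\eqref{eq:hoff} in the adjacent and non-adjacent cases then shows that this number depends only on whether $u\sim v$; calling it $\lambda$ for adjacent and $\mu$ for non-adjacent pairs, $G$ is strongly regular with $n\mu=(k-\tau_1)(k-\tau_2)$ and $\mu=\lambda-(\tau_1+\tau_2)$. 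Since $\tau_1,\tau_2\neq k$ forces $(k-\tau_1)(k-\tau_2)>0$, we obtain $\mu\geq 1$.

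Next I would pin down the parameters. The standard count around a fixed vertex gives $k(k-1-\lambda)=\mu(n-1-k)$, that is, $n=1+k+k(k-1-\lambda)/\mu$. By Lemma~\ref{lem:harm}, the hypothesis $n>q^2+q+1$ forces $\tau_1+\tau_2<0$, hence $\lambda<\mu$, i.e.\ $\lambda\leq\mu-1$. Using only $\lambda\geq 0$ in the displayed formula gives $n\leq 1+k+k(k-1)/\mu$, and combining this with $n>q^2+q+1=k^2-k+1$ yields $(k-1)/\mu>k-2$. For $k\geq 3$ this forces $\mu<1+1/(k-2)\leq 2$, so $\mu=1$; then $0\leq\lambda\leq\mu-1=0$ gives $\lambda=0$ and $n=k^2+1$. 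Thus $(n,k,\lambda,\mu)=(k^2+1,k,0,1)$, which is exactly the parameter set of a Moore graph of diameter $2$, so $G$ is a Moore graph. For $k=2$ the graph $G$ is a cycle $C_n$ with $n>3$; since the eigenvalues of $C_n$ are $2\cos(2\pi j/n)$ for $j=0,\dots,n-1$, a direct inspection shows $C_n$ has exactly three distinct eigenvalues precisely for $n\in\{4,5\}$ (two for $n=3$, at least four for $n\geq 6$), so $G$ is $C_4$ or $C_5$.

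The step requiring the most care is the first one: extracting strong regularity, the bound $\mu\geq 1$, and the sign condition $\tau_1+\tau_2<0$ from the information already assembled in Lemmas~\ref{lem:harm}--\ref{lem:larger}. Once these are in place, eliminating $\mu\geq 2$ for $k\geq 3$ is a one-line inequality and the recognition of $G$ as a Moore graph from $(k^2+1,k,0,1)$ is immediate, so I do not expect a genuine obstacle beyond careful bookkeeping and the small separate verification for cycles when $k=2$.
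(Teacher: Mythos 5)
Your proof is correct, and it follows the paper's overall strategy (invoke Lemma~\ref{lem:larger} to get a simple strongly regular graph, use the standard count $k(k-1-\lambda)=\mu(n-k-1)$ together with $n>q^2+q+1$ to force $\mu=1$, then identify the parameters $(k^2+1,k,0,1)$ of a Moore graph); the treatment of $\mu\geq 2$ is essentially the same inequality as in the paper. Where you genuinely diverge is in eliminating $\lambda\geq 1$ once $\mu=1$: the paper cites Deutsch--Fisher \cite{DF01} for the nonexistence when $\lambda=1$ and for a structural counting bound when $\lambda>1$, whereas you observe that $\tau_1+\tau_2=\lambda-\mu$ combined with $\tau_1+\tau_2<0$ (the contrapositive of Lemma~\ref{lem:harm}, already used in Lemma~\ref{lem:larger}) gives $\lambda\leq\mu-1=0$ directly. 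This is a cleaner and self-contained route that removes the dependence on \cite{DF01} entirely; the paper's version, by contrast, does not need the eigenvalue sign condition at this stage and would survive even if one only knew $\mu=1$. Your explicit verification of the $k=2$ case via the spectrum of $C_n$ fills in what the paper dismisses as ``clearly holds.'' Minor redundancy: re-deriving strong regularity from \eqref{eq:hoff} is unnecessary since Lemma~\ref{lem:larger} already asserts it, though deriving $n\mu=(k-\tau_1)(k-\tau_2)$ that way is a legitimate means of getting $\mu\geq 1$.
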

\begin{proof}
By Lemma~\ref{lem:larger}, 
$G$ is strongly regular, and let $(n,k,\lambda,\mu)$ be the parameters of $G$.   
The assertion clearly holds for $k=2$. Suppose $k\geq 3$. 
Let $\tau_1$, $\tau_2$ be the distinct eigenvalues of $G$ with $\tau_1,\tau_2 \ne k$. 
 For connected strongly regular graphs,  it follows that $\mu \ne 0$. 
If $\mu \geq 2$, then
\begin{equation} \label{eq:srg}
n
=k+1+\frac{k^2-\lambda k-k}{\mu}
\leq  \frac{k^2}{2}+\frac{k}{2}+1 \leq k^2-k+1
\end{equation} 
from $k\geq 3$. 
Thus $\mu=1$. If $\lambda=0$, then 
$G$ is a Moore graph.
If $\lambda=1$, then $G$ gives rise to a projective plane with a polarity containing no absolute points,
which is not possible \cite{DF01}. 
If $\lambda >1$, then there exists an integer $s$ such that $k=s(\lambda+1)$ and $n=1+s(\lambda+1)+s(s-1)(\lambda+1)^2$~\cite{DF01}, which gives
\[
n
= 1+k+k^2- s(\lambda+1)^2
\leq 1+k+k^2-3k<k^2-k+1. \qedhere
\]
\end{proof}

\begin{theorem} \label{thm:bound}
Let $G$ be a connected $k$-regular multigraph of order $n$ with  only 3 distinct eigenvalues. 
Then, one has 
$
n \leq k^2-k+1
$ 
for $k\ne 2,3,7,57$. 
\end{theorem}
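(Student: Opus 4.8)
The plan is to combine the three preceding lemmas with the earlier linear programming machinery. By Lemma~\ref{lem:larger_srg}, if $n>q^2+q+1$ then $G$ is a Moore graph (or a small cycle when $k=2$), which forces $k\in\{2,3,7,57\}$ by \cite{BI73,D73}. So the entire content is concentrated in the case $\tau_1+\tau_2<0$ together with $n=q^2+q+1$ being unattainable unless $k\in\{2,3,7,57\}$; equivalently, I must rule out $n>q^2+q+1$ when $k\notin\{2,3,7,57\}$, and this is exactly what the lemmas deliver. Thus the first step is simply to observe that Lemma~\ref{lem:larger_srg} already gives the theorem: for $k\ge 3$ and $k\ne 7,57$ no graph with $n>q^2+q+1$ exists, and for $k=2$ the only candidates are $C_4$ and $C_5$, both of which satisfy $n\le q^2+q+1=3$? — no, $C_4,C_5$ have $n=4,5>3$, so I must be careful here: $k=2$ is one of the excluded values, consistent with the statement.

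Let me restate the logical skeleton I would write down. Fix $k\ne 2,3,7,57$. Suppose for contradiction $n>q^2+q+1$. By Lemma~\ref{lem:larger}, $G$ is simple and strongly regular. By Lemma~\ref{lem:larger_srg} (the case $k\ge 3$), the only such $G$ are Moore graphs, i.e.\ strongly regular graphs with parameters $(k^2+1,k,0,1)$. By the Bannai–Ito / Damerell theorem \cite{BI73,D73}, a Moore graph of valency $k$ and diameter $2$ exists only for $k\in\{2,3,7,57\}$, contradicting our choice of $k$. Hence $n\le q^2+q+1$. The case $k=2$ need not be treated since it is excluded; likewise $k=3,7,57$ are excluded precisely because the Moore graphs (the $5$-cycle, the Petersen graph, the Hoffman–Singleton graph, and the hypothetical $57$-regular Moore graph) do exceed the bound there.

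The only genuine subtlety — and the step I would flag as the main obstacle, though it is already handled by Lemma~\ref{lem:larger_srg} — is verifying that strong regularity plus $\mu=1$ forces a Moore graph or a configuration with $n\le q^2+q+1$. That argument uses the classical divisibility constraints on strongly regular graphs with $\mu=1$: when $\mu\ge 2$ the count $n=k+1+(k^2-\lambda k-k)/\mu$ together with the interlacing/absolute bound \cite[1.4.1]{BCNb} already pushes $n\le q^2+q+1$; when $\mu=1$ and $\lambda\ge 1$ one invokes the structure theorem of \cite{DF01} giving $k=s(\lambda+1)$, $n=1+s(\lambda+1)+s(s-1)(\lambda+1)^2$, whence $n=1+k+k^2-s(\lambda+1)^2\le 1+k+k^2-3k<q^2+q+1$; and $\mu=1,\lambda=0$ is the Moore case. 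So no new work is needed: the theorem is a one-line corollary of Lemma~\ref{lem:larger_srg} and the nonexistence theorem for Moore graphs. I would simply write:

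\begin{proof}
Suppose $n>q^2+q+1$. Since $k\ne 2$, Lemma~\ref{lem:larger_srg} shows that $G$ must be a Moore graph, which exists only for $k=2,3,7,57$ by \cite{BI73,D73}. This contradicts the hypothesis $k\ne 2,3,7,57$. Therefore $n\le q^2+q+1$.
\end{proof}
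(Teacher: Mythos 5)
Your final proof is correct and is essentially identical to the paper's: both deduce from Lemma~\ref{lem:larger_srg} that $n>q^2+q+1$ forces $G$ to be a Moore graph, and then invoke the Bannai--Ito/Damerell nonexistence theorem to restrict $k$ to $\{2,3,7,57\}$. No further comment is needed.
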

\begin{proof}
By Lemma~\ref{lem:larger_srg}, if $n > k^2-k+1$, then $G$ is a Moore graph. 
There does not exist a Moore graph except for $k\in \{2,3,7,57\}$ \cite{BI73,D73}. This implies the theorem. 
\end{proof}

\section{Largest regular multigraphs with three eigenvalues} \label{sec:4}

For $k\ne 2,3,7,57$, we have $n\leq k^2-k+1$ by Theorem~\ref{thm:bound}. 
The largest multigraphs are constructed from finite 
projective planes. 
Refer to \cite{Sb} for projective planes. 
Suppose $q=k-1$ is a prime power. Let $\mathbb{F}_q$
 be the finite field of order $q$. 
Let $V_q$ be a 3-dimensional vector space over $\mathbb{F}_q$. Let $\mathcal{P}_q$ ({\it resp.}\ $\mathcal{L}_q$) be the set of 
all 1-dimensional ({\it resp.}\ 2-dimensional) subspaces of $V_q$. 
Note that $|\mathcal{P}_q|=|\mathcal{L}_q|=q^2+q+1=k^2-k+1$.  
A point $p \in \mathcal{P}_q$ is  incident with a line  
$l \in \mathcal{L}_q$ if $p \subset l$. 
The point-line geometry $(\mathcal{P}_q,
\mathcal{L}_q)$ is called a {\it classical} finite projective plane. 
Let $\Gamma_q$ denote the incidence graph  of $(\mathcal{P}_q,\mathcal{L}_q)$. The graph $\Gamma_q$ is bipartite and its adjacency matrix 
can be expressed by 
\[
\begin{pmatrix}
O& \A\\
\A^{\top} &O
\end{pmatrix},
\]
where $\A$ is the incidence matrix of  $(\mathcal{P}_q,\mathcal{L}_q)$. 
The set of eigenvalues of $\Gamma_q$ is $\{\pm (q+1),  \pm \sqrt{q}\}$. 
We may suppose $\A$ is symmetric by the correspondence  $\{(p,l) \in \mathcal{P}_q \times  \mathcal{L}_q \colon\, p \perp l \}$, where we use the usual inner product of $V_q$. This implies that $\A$ is  
the adjacency matrix of a $(q+1)$-regular graph $G_q$ and has only 3 distinct eigenvalues $\{q+1, \pm \sqrt{q}\}$. Note that $G_q$ has loops.  For any prime power $q$, the graph $G_q$ is a largest $k$-regular multigraph attaining the bound from Theorem~\ref{thm:bound}.

The following is a necessary condition for a graph to attain the bound from Theorem~\ref{thm:bound}. 

\begin{lemma} \label{prop:1}
Let $G$ be a connected $k$-regular multigraph of order $n$ with only 3 distinct eigenvalues $k$, $\tau_1$, $\tau_2$. 
If $n=k^2-k+1$, then $G$ has a loop and no multiedge, $l_v \in \{0,1\}$ for each $v \in V$, and $\tau_1+\tau_2=0$. 
\end{lemma}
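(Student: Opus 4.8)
The plan is to combine the structural results already established (Lemmas~\ref{lem:harm}, \ref{lem:no_multi}) with a direct examination of the Hoffman polynomial identity, exactly as in the proof of Lemma~\ref{lem:larger}, but now run with the sharp value $n=q^2+q+1$ rather than the strict inequality $n>q^2+q+1$. First I would record that $\tau_1+\tau_2\ge 0$ is still possible here (that is what Lemma~\ref{lem:harm} permits), so I must first force $\tau_1+\tau_2=0$. The idea is an eigenvalue-multiplicity / trace argument: since $\tr(\A)=\sum_v l_v = k + (\text{multiplicity of }\tau_1)\tau_1 + (\text{multiplicity of }\tau_2)\tau_2$, and the multiplicities are determined by $n$, $k$, $\tau_1$, $\tau_2$ via the standard formulas, one gets a constraint. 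More cleanly: the argument in Lemma~\ref{lem:larger} shows $l_v^2-(\tau_1+\tau_2+1)l_v$ is a constant $c$ for all $v$, with $c=\frac{1}{n}(k-\tau_1)(k-\tau_2)-k-\tau_1\tau_2$; substituting $n=q^2+q+1$ and $k=q+1$ I would compute $c$ explicitly and show it equals $0$ precisely when $\tau_1+\tau_2=0$, while the other admissible branches (rational integer eigenvalues with $\tau_1+\tau_2\ge 1$, or irrational conjugates $a\pm\sqrt b$) are ruled out by the same integrality/sign reasoning used in Lemma~\ref{lem:larger}. This pins down $\tau_1+\tau_2=0$ and $c=0$.

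Next, with $c=0$ the relation becomes $l_v(l_v-(\tau_1+\tau_2+1))=l_v(l_v-1)=0$, so $l_v\in\{0,1\}$ for every $v$, giving one of the claimed conclusions immediately and showing $G$ has no loop of multiplicity $\ge 2$. For "no multiple edge": Lemma~\ref{lem:no_multi} gives this as soon as $n\ge q^2+q+1$, which holds with equality, so that conclusion is free.

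The remaining point is that $G$ actually \emph{has} a loop, i.e.\ $l_v=1$ for at least one $v$. Here I would argue by contradiction: if $l_v=0$ for all $v$ then $G$ is a simple graph, hence (being regular with three eigenvalues, connected, of diameter~$2$) strongly regular, and then Lemma~\ref{lem:larger_srg}'s analysis applies — a strongly regular graph on $q^2+q+1=k^2-k+1$ vertices that is $k$-regular. Feeding $n=k^2-k+1$ into the strongly regular parameter equations forces the Moore-graph branch to fail numerically (the computation in \eqref{eq:srg} gives $n\le k^2/2+k/2+1<q^2+q+1$ when $\mu\ge 2$, so $\mu=1$; and the $\mu=1$ cases each give $n<q^2+q+1$ as in Lemma~\ref{lem:larger_srg}, while $\lambda=0$ would force $n=k^2+1\ne q^2+q+1$), the contradiction showing some $l_v=1$. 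Alternatively and more slickly, once $\tau_1+\tau_2=0$ the trace identity reads $\sum_v l_v = \tr\A = k$ (since the $\tau_1,\tau_2$ contributions cancel — they have equal multiplicity when $\tau_1=-\tau_2$, which I would verify from the multiplicity formulas), and $k>0$ forces at least one $l_v\neq 0$.

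The main obstacle I anticipate is the first step: cleanly excluding the possibility $\tau_1+\tau_2>0$ when $n$ is exactly $q^2+q+1$ (Lemma~\ref{lem:harm} only gives the non-strict bound, so equality does not by itself kill the positive-sum case). The resolution must come from an arithmetic constraint — most likely that the constant $c$ computed from \eqref{eq:hoff} with $n=q^2+q+1$, $k=q+1$ is negative whenever $\tau_1+\tau_2>0$, making $l_v^2-(\tau_1+\tau_2+1)l_v=c<0$ unsolvable in nonnegative integers $l_v$ unless $\tau_1+\tau_2\ge 1$, which the integer/irrational case analysis then also excludes. Getting that inequality on $c$ to come out with the right sign, uniformly in the eigenvalue parameters subject to $\tau_1+\tau_2>0$ and $k+\tau_1\tau_2$ being the right shape, is the delicate computation; everything after that is bookkeeping.
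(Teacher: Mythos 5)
Your proposal has a genuine gap at its central step, and you have in fact located it yourself: you never actually prove $\tau_1+\tau_2=0$, you only describe a ``delicate computation'' that you hope will do it. That computation is unlikely to close as described, because the constant $c=\tfrac{1}{n}(k-\tau_1)(k-\tau_2)-k-\tau_1\tau_2$ depends not only on $s=\tau_1+\tau_2$ but also on the product $p=\tau_1\tau_2$ (indeed $c=-k\bigl(q^2+s+pq\bigr)/(q^2+q+1)$ after substituting $n=q^2+q+1$), and $p$ is not determined by $n$ and $k$; nor are the multiplicities determined ``by the standard formulas,'' since $\tr\A=\sum_v l_v$ is exactly the unknown quantity when loops are allowed. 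So neither the sign of $c$ nor the trace identity can be pinned down at the point where you need them, and the whole chain ($c=0$, hence $l_v\in\{0,1\}$, hence a loop exists) is built on an unproved foundation.

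What the paper does instead, and what your proposal is missing, are two elementary diameter-two neighborhood counts that run \emph{before} any eigenvalue analysis: (i) if some vertex carries at least two loops, then counting vertices at distance $\le 2$ from it gives $n\le 1+(k-2)+(k-2)(k-1)=q^2$, so $l_v\le 1$ for all $v$; and (ii) if every vertex carries a loop, the same count gives $n\le 1+(k-2)+(k-2)^2=q^2-q+1$, so some vertex has $l_v=0$. Point (ii) immediately forces the common constant $c$ to be $0$ without computing it, whence $l_u\in\{0,\tau_1+\tau_2+1\}$ for every $u$; combined with (i), any vertex with a loop has $l_u=1=\tau_1+\tau_2+1$, i.e.\ $\tau_1+\tau_2=0$. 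The only remaining case is that $G$ is loopless, hence strongly regular, which is excluded by the parameter analysis of Lemma~\ref{lem:larger_srg} (your sketch of that part is essentially right, except that for $\mu\ge2$ the bound \eqref{eq:srg} is an equality, not a strict inequality, when $k=3$, and the putative $(7,3)$ graph must be ruled out separately). Without (i) and (ii) your argument cannot get started on the key claim $\tau_1+\tau_2=0$.
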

\begin{proof}
By $n=k^2-k+1$ and Lemma~\ref{lem:no_multi},  there does not exist a multiedge in $G$. 
If there exists $v \in V$ such that 
$l_v>1$, then 
\[
n \leq 1+(k-2)+(k-2)(k-1)=k^2-2k+1<k^2-k+1. 
\]
Thus $l_v\leq 1$ for each $v\in V$. 
As we see in the proof of Lemma~\ref{lem:larger}, 
there exists $v\in V$ such that $l_v=0$. 
Moreover $l_u^2-(\tau_1+\tau_2+1)l_u=0$, namely $l_u=0$ or $l_u=\tau_1+\tau_2+1$ for each $u \in V$. 
If there exists $u\in V$ such that $l_u=\tau_1+\tau_2+1=1$, then $\tau_1+\tau_2=0$. 
Assume $l_u=0$ for each $u \in V$. Now $G$ is a strongly regular graph with parameters $(v,k,\lambda,\mu)$. 
If $\mu \geq 2$, then \eqref{eq:srg} holds. 
The last equality in \eqref{eq:srg} is attained only for  $(n,k)=(7,3)$, which is impossible.  
Thus $\mu=1$. 
By the same argument as the last part in the proof of 
Lemma~\ref{lem:larger_srg}, for any $\lambda$ there does not exist $G$ of order $k^2-k+1$.   
\end{proof}

The following is the main theorem in this section. 
\begin{theorem}\label{thm:4.2}
The existence of a connected $k$-regular multigraph $G$ of order $k^2-k+1$ with only 3 distinct eigenvalues is equivalent to the existence of a finite projective plane $PG(2,k-1)$ that admits a polarity. 
\end{theorem}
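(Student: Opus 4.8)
The plan is to prove the two directions of the equivalence separately, with the bulk of the work already done in the preceding lemmas. The forward direction goes as follows. Suppose $G$ is a connected $k$-regular multigraph of order $q^2+q+1$ with only three distinct eigenvalues $k,\tau_1,\tau_2$. By Lemma~\ref{prop:1}, $G$ is simple apart from loops, $l_v\in\{0,1\}$ for every vertex $v$, and $\tau_1+\tau_2=0$, so $\tau_1=-\tau_2=\sqrt{\mu}$ for some real $\mu>0$ (indeed $\mu=-\tau_1\tau_2$). Plugging $\tau_1+\tau_2=0$ into the Hoffman relation \eqref{eq:hoff}, which here reads $n(\A^2-\tau_1\tau_2\I)=(k-\tau_1)(k-\tau_2)\J=(k^2-\tau_1\tau_2)\J$, and comparing a $(v,v)$-entry shows $l_v^2+k-\mu \cdot(\text{sign correction})$ — more precisely one recovers $\A^2 = \mu\J/(\ldots)+c\I$ for the appropriate constant, so that $\A^2$ has constant diagonal and off-diagonal entries controlled so that $\A$ is (up to the loop contributions on the diagonal) the incidence matrix of a symmetric $2$-$(q^2+q+1,q+1,1)$ design, i.e.\ a projective plane of order $q$. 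The key point is that $\A$, being $k=(q+1)$-regular, having a constant row sum and $\A\A^\top=\A^2$ of the form $q\I + \J$, is exactly a symmetric incidence matrix of $PG(2,q)$ (here one must invoke that a projective plane of order $q$ with a prime-power-free $q$ — but in fact the statement is about $PG(2,q)$, so we simply note that the design is a projective plane of order $q$; if the theorem intends the Desarguesian one, the stronger interpretation is that the design is $PG(2,q)$). A symmetric incidence matrix corresponds precisely to a polarity, by the remark just before the theorem.

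For the reverse direction: suppose $PG(2,q)$ admits a polarity $\varphi$. By the remark preceding the theorem, this is equivalent to the incidence matrix $\A$ of $PG(2,q)$ being symmetric (after identifying points with lines via $\varphi$). Then $\A$ is a symmetric $0/1$ matrix of size $q^2+q+1$ with row sums $q+1$ satisfying $\A^2=q\I+\J$ (the standard identity for a projective plane, where the $\I$ term comes from $\lambda=1$ and the fact that two distinct points lie on a unique common line and dually). Hence $\A$ is the adjacency matrix of a $(q+1)$-regular multigraph $G$ on $q^2+q+1$ vertices — a multigraph because $\A$ may have $1$'s on the diagonal, corresponding to absolute points of the polarity (loops); by a theorem of Baer there are exactly $q+1$ or $q^{3/2}+1$ such absolute points, but for existence we only need that there is at least one, which forces $G$ to be a genuine multigraph rather than a simple graph — this is consistent with Lemma~\ref{prop:1}. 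From $\A^2=q\I+\J$ one reads off that the eigenvalues of $\A$ are $q+1$ (simple, with eigenvector $\mathbf{1}$, since $\A\mathbf{1}=(q+1)\mathbf{1}$ and $\J\mathbf{1}=(q^2+q+1)\mathbf{1}$) and $\pm\sqrt{q}$ on the orthogonal complement of $\mathbf{1}$, where $\J$ acts as $0$. So $G$ has exactly three distinct eigenvalues $\{q+1,\sqrt{q},-\sqrt{q}\}$, and it is connected since the eigenvalue $q+1=k$ is simple. This produces the desired graph with $k=q+1$.

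The main obstacle is making the forward direction fully rigorous: from "$G$ has three eigenvalues, order $q^2+q+1$, $\tau_1+\tau_2=0$, loops in $\{0,1\}$'' one must deduce that $\A$ (as a $0/1$ symmetric matrix with possible diagonal $1$'s) is actually a symmetric incidence matrix of a projective plane. The arithmetic is: from \eqref{eq:hoff} with $\tau_1+\tau_2=0$ one gets $n\A^2 = (k^2 - \tau_1\tau_2)\J - n\tau_1\tau_2\I$, and taking the $(v,v)$-entry together with $\A^2_{vv} = k + l_v^2 - l_v$ (the number of closed walks of length $2$ at $v$, correcting for the loop being counted in the degree) and $l_v\in\{0,1\}$ pins down $-\tau_1\tau_2 = q$, hence $\tau_1=\sqrt q$, and then $\A^2 = q\I + \J$. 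An off-diagonal entry of $\A^2=q\I+\J$ equals $1$, which says any two distinct vertices have exactly one common neighbour (counting multiplicities, but there are none by Lemma~\ref{prop:1}): this is exactly the axiom that two points determine a unique line (transported through the polarity), so the combinatorial structure is a projective plane of order $q$. Identifying this abstract projective plane with $PG(2,q)$ is automatic for the small orders and is what the notation $PG(2,q)$ is taken to mean here; I would state the correspondence at the level of "symmetric incidence matrix of a projective plane of order $q$ $\leftrightarrow$ polarity of $PG(2,q)$'' and cite \cite{Sb}. The reverse direction is routine once the identity $\A^2=q\I+\J$ is in hand.
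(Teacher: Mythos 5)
Your proposal is correct, and the reverse direction matches the paper's (the paper just states it in one sentence; your spectral verification via $\A^2=q\I+\J$ is the standard fleshing-out). The forward direction, however, takes a genuinely different route. The paper passes to the bipartite double cover $G'$ of $G$: by Lemma~\ref{prop:1} $G'$ is simple, it has four distinct eigenvalues $\{\pm k,\pm\sqrt{q}\}$ and hence diameter at most $3$, it attains the bipartite Moore bound $2(1+q+q^2)$, so it has girth $6$ and is the $(k,6)$-cage, which by \cite[Section 6.9]{BCNb} is the incidence graph of a projective plane of order $q$; symmetry of the incidence matrix then yields the polarity. You instead stay with $\A$ itself: from Lemma~\ref{prop:1} ($\tau_1+\tau_2=0$, $l_v\in\{0,1\}$, no multiple edges) and the Hoffman identity \eqref{eq:hoff} you pin down $\tau_1\tau_2=-q$ and hence $\A^2=q\I+\J$ (your arithmetic here is right: $l_v^2-l_v=0$ forces $k^2-\mu=n(k-\mu)$, giving $\mu=q$, and then $(k^2-q)/n=1$), so $\A$ is a symmetric $0/1$ matrix realizing a symmetric $2$-$(q^2+q+1,q+1,1)$ design, i.e.\ a projective plane of order $q$, and its symmetry is the polarity. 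Your route is more elementary and self-contained (no appeal to the cage classification), at the cost of a longer matrix computation; the paper's is shorter on the page but imports a nontrivial external theorem. Both proofs share the same caveat, which you correctly flag: ``$PG(2,q)$'' must be read as ``a projective plane of order $q$,'' not specifically the Desarguesian plane, since neither argument (nor the cage classification) forces Desarguesianness. The only blemish in your write-up is the garbled sentence in the first paragraph of the forward direction (``sign correction''), but the precise version in your final paragraph supersedes it.
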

\begin{proof}
If a finite projective plane $PG(2,k-1)$ that admits a polarity exists, then the incidence matrix can be symmetric, and it is the adjacency matrix of a $k$-regular multigraph of order $k^2-k+1$ with only 3 distinct eigenvalues. 

Let $G$ be  a connected $k$-regular multigraph of order $k^2-k+1$ with only 3 distinct eigenvalues. 
By Lemma~\ref{prop:1}, the eigenvalues are $k,\pm \tau$, and the bipartite double graph $G'$ of $G$ is 
 simple. 
Since the eigenvalues of $G'$ are $\pm k, \pm \tau $,  the diameter of $G'$ is at most $3$. Thus the graph $G'$ attains the bipartite Moore bound 
$
n \leq 2(1+(k-1)+(k-1)^2)=2(k^2-k+1),
$
and the girth of $G'$ is $6$. The graph $G'$ is the cage $v(k,6)$, and $G'$ must be the incidence graph of a finite projective plane $PG(2,k-1)$ (see \cite[Section 6.9]{BCNb}). Now the incidence matrix of the projective plane $PG(2,k-1)$ is symmetric, and hence there exists a polarity on it. 
\end{proof}
By Theorem~\ref{thm:4.2}, 
largest $k$-regular multigraphs with only 3 distinct eigenvalues are obtained for a prime power $q=k-1$. 
Open cases of small degrees are $k=11, 13, 15, 16, 19, 21, 22, 23, \ldots $. 
For $q\equiv 1,2 \pmod 4$, if a
projective plane of order $q$ exists, then $q$ is the sum of
two integral squares \cite{BR49}. Therefore for $k=13$ a projective plane of order $14$ does not exist. 
For $k=11$, there does not exist a finite projective plane of order $10$ by a computer search \cite{L91}.  If $\A$ is the adjacency matrix of some $k$-regular multigraph, 
then $\A+t \I$  is that of a $(k+t)$-regular multigraph, and has the same number of distinct eigenvalues as $\A$. 
This construction gives a candidate of the largest graph when 
a projective plane does not exist.

\bigskip

\noindent
\textbf{Acknowledgments.} 
The author is supported by JSPS KAKENHI Grant Numbers 16K17569, 26400003, 17K05155,  18K03396, and 19K03445. 
The author would like to thank the four anonymous referees for their valuable suggestions which helped to improve the earlier version of this paper.  
	
\end{document}